\documentclass[11pt]{article}

\usepackage[margin=1in]{geometry}
\usepackage{amsmath,amssymb,amsthm,bm,mathtools}
\usepackage{enumitem}
\usepackage{microtype}
\usepackage{booktabs}
\usepackage{xcolor}
\usepackage[colorlinks=true,linkcolor=blue,citecolor=blue,urlcolor=blue]{hyperref}
\usepackage{comment}
\usepackage{parskip}

\newtheorem{definition}{Definition}[section]
\newtheorem{assumption}[definition]{Assumption}
\newtheorem{lemma}[definition]{Lemma}
\newtheorem{theorem}[definition]{Theorem}
\newtheorem{corollary}[definition]{Corollary}

\theoremstyle{remark}
\newtheorem{remark}[definition]{Remark}

\newcommand{\R}{\mathbb{R}}
\newcommand{\Zp}{\mathbb{Z}_{\ge 0}}
\newcommand{\B}{\mathfrak{B}}
\newcommand{\W}{\mathbb{W}}
\newcommand{\col}{\operatorname{col}}
\newcommand{\im}{\operatorname{im}}
\newcommand{\rank}{\operatorname{rank}}
\newcommand{\id}{\operatorname{id}}
\newcommand{\cD}{\mathcal{D}}
\newcommand{\cU}{\mathcal{U}}

\newcommand{\dd}{\mathrm{d}}

\title{Trajectory Manifolds for Nonlinear Data-Enabled Predictive Control\\
\large Part I: Existence, Smoothness, and Intrinsic Dimension}
\author{Arda Bayer}
\date{\today}

\begin{document}
\maketitle

\begin{abstract}
This note establishes a geometric foundation for trajectory-manifold representations of deterministic nonlinear systems in a behavioral setting motivated by data-enabled predictive control.  For a discrete-time system $x_{k+1}=f(x_k,u_k)$ with measured state and a $C^r$ transition map, $r\geq 1$, we consider the terminal-state-augmented finite-horizon behavior consisting of all admissible state-input trajectories over a prediction
horizon $N$.  We prove that this behavior is a $C^r$ embedded submanifold of the ambient trajectory space with intrinsic dimension $n+Nm$, where $n$ and $m$ are the state and input dimensions.  Moreover, the rollout map from the admissible initial-state and input coordinates $(x_0,\mathbf u)$ is a $C^r$ diffeomorphism onto the behavior manifold, providing explicit global smooth coordinates.  This yields a canonical exact encoder--decoder representation and implies that any exact differentiable latent representation of the full behavior must have latent dimension at least $n+Nm$.  The geometric result does not require controllability, stabilizability, or invertibility of the dynamics.  Corresponding results are given for zero-order-hold sampled continuous-time systems and fixed-step numerical transition maps.  These results provide the deterministic geometric foundation for subsequent data-driven approximation and predictive-control development.
\end{abstract}

\vspace{0.5em} \noindent\textbf{AI Assistance Disclosure.} Generative AI assistance, OpenAI GPT-6 Astra, was used in the development and editing of portions of the proofs and exposition in this document. I have independently reviewed and verified all mathematical arguments, claims, citations, and text in the final manuscript. I accept the final manuscript as my own scholarly work and assume full responsibility for its content, including any remaining errors. 
    
\vspace{0.5em}
\noindent\textbf{Acknowledgment.}
I would like to thank the Aazhang Lab at Rice University for stimulating my interest in the inverted pendulum as a toy problem for fully data-driven nonlinear control, which motivated the trajectory-manifold perspective considered in this work.

\vspace{0.5em}
\noindent\textbf{Funding.}
This research is funded in part through an award from the Rice University Provost's TMC Collaborator Fund.

\vspace{0.5em} \noindent\textbf{Keywords:} behavioral systems; nonlinear control; data-driven control; trajectory manifolds; data-enabled predictive control; model predictive control.

\section{Introduction}

The behavioral approach to dynamical systems describes a system through the collection of signal trajectories that are compatible with its laws \cite{willems1979physical,willems1986timeseries,willems1991paradigms, markovsky2006behavioral}. In this framework, a dynamical system is represented by a time axis, a signal space, and a behavior consisting of its admissible trajectories.

For linear time-invariant systems, Willems' Fundamental Lemma provides a finite-data representation of finite-horizon behavior under controllability and persistent-excitation conditions \cite{willems2005pe}. DeePC uses this result to represent finite-horizon trajectories directly from measured data \cite{coulson2019deepc}. The present note uses the same behavioral viewpoint, but considers the geometry of finite-horizon trajectories of deterministic nonlinear systems.

Smooth trajectory manifolds for nonlinear control systems have previously been studied in a continuous-time functional-analytic setting. Hauser and Meyer \cite{hauser1998trajectory} showed that suitable continuous-time state-input trajectories form a $C^r$ Banach manifold and related its tangent spaces to trajectories of the linearized dynamics. This projection-operator viewpoint was subsequently used for trajectory optimization \cite{hauser2002projection}. Notarstefano and Hauser \cite{notarstefano2008curvature} further studied the second-order geometry of these trajectory manifolds using a weighted inner product and a metric-dependent radius of curvature.

Here we consider the terminal-state-augmented finite-horizon behavior of a discrete-time nonlinear system, as well as sampled continuous-time systems with a finite-dimensional input parameterization. Because a trajectory retains its initial state and all $N$ input values, its natural free coordinates are $(x_0,\mathbf u)$. We show that the resulting behavior is a finite-dimensional $C^r$ embedded manifold of dimension
\[
    n+Nm,
\]
and that these free variables provide global smooth coordinates for the behavior.

The central question of this note is therefore:
\begin{quote}
Under what conditions is the finite-horizon behavior of a deterministic
nonlinear system a smooth finite-dimensional manifold, and what is its
dimension?
\end{quote}
The following sections answer this question and develop its immediate
geometric consequences.

\section{Behavioral notation for a nonlinear state-space system}

We adopt Willems' behavioral-system notation
\cite{willems1986timeseries,willems1991paradigms}.  A dynamical system is
described by the triple
\begin{equation}
    \Sigma=(\mathbb{T},\mathbb{W},\B),
\end{equation}
where $\mathbb{T}$ is the time axis, $\mathbb{W}$ is the signal space, and
\[
    \B\subseteq\mathbb{W}^{\mathbb{T}}
\]
is the \emph{behavior}, i.e., the set of trajectories
$w:\mathbb{T}\rightarrow\mathbb{W}$ that are compatible with the laws of
the system.  We consider discrete time and therefore take
$\mathbb{T}=\Zp$.  This is the same behavioral framework employed in the
Fundamental Lemma \cite{willems2005pe} and subsequently in DeePC
\cite{coulson2019deepc}.

We consider the deterministic nonlinear, and discrete-time, state-space system
\begin{equation}
    x(t+1)=f\bigl(x(t),u(t)\bigr),
    \qquad t\in\Zp,
    \label{eq:nonlinear_system}
\end{equation}
where
\[
    x(t)\in\R^n,
    \qquad
    u(t)\in\R^m.
\]
Because the present theory assumes full-state measurement, we choose the behavioral signal to be
\begin{equation}
    w(t)=\col\bigl(u(t),x(t)\bigr)\in\W:=\R^{m+n}.
\end{equation}
Thus, in the terminology of the behavioral framework, the input and measured
state are taken as the external variables of interest.  This is the
full-state analogue of the input/output partition
$w=\col(u,y)$ used in the LTI behavioral formulation and in DeePC
\cite{willems1986timeseries,coulson2019deepc}.

For a prediction horizon $N\in\mathbb Z_{>0}$, define the stacked input and state sequences
\begin{align}
    \mathbf u &:= \col(u_0,u_1,\ldots,u_{N-1})\in\R^{Nm}, \\
    \mathbf x &:= \col(x_0,x_1,\ldots,x_N)\in\R^{(N+1)n}.
\end{align}
The corresponding finite-horizon signal vector is
\begin{equation}
    \mathbf w:=\col(\mathbf u,\mathbf x)
    \in\R^{Nm+(N+1)n}.
    \label{eq:w_definition}
\end{equation}
The input-first ordering in \eqref{eq:w_definition} is chosen to remain consistent with the behavioral convention $w=\col(u,y)$ used in the LTI literature and in DeePC \cite{willems1986timeseries,coulson2019deepc}. The particular ordering of the coordinates does not affect the results below, since reordering the entries of $w$ does not change the trajectory set or its dimension.

In the behavioral literature, a finite-horizon or truncated behavior denotes the restriction of trajectories in $\B$ to a finite time interval \cite{willems2005pe,coulson2019deepc}.  Our control trajectory additionally retains the terminal state $x_N$, as is customary in finite-horizon optimal control.  We therefore distinguish the following terminal-state-augmented finite-horizon behavior from the ordinary restriction of $\B$.

\begin{definition}[Terminal-state-augmented $N$-step behavior]
\label{def:augmented_behavior}
The terminal-state-augmented $N$-step state-input behavior of
\eqref{eq:nonlinear_system} is
\begin{equation}
\B_N^{+}
:=
\left\{
    \col(\mathbf u,\mathbf x)
    \in\R^{Nm+(N+1)n}
    \;\middle|\;
    x_{k+1}=f(x_k,u_k),\quad k=0,\ldots,N-1
\right\}.
\label{eq:finite_behavior}
\end{equation}
When $f$ is not globally defined, $\B_N^{+}$ contains only those trajectories for which every required evaluation of $f$ is well-defined.
\end{definition}

\section{Existence of the trajectory manifold}

\subsection{Assumptions}

We now establish conditions under which the finite-horizon behavior of \eqref{eq:nonlinear_system} admits a finite-dimensional smooth manifold representation.  The result is stated for a general nonlinear discrete-time system and characterizes both the smoothness and the dimension of its
finite-horizon behavior.

\begin{assumption}[Smooth deterministic dynamics]
\label{ass:smooth_dynamics}
Let
\[
    \mathcal O\subseteq\R^{n+m}
\]
be open, and let
\begin{equation}
    f:\mathcal O\rightarrow\R^n
\end{equation}
be of class $C^r$ for some integer $r\ge 1$.
\end{assumption}

The openness of $\mathcal O$ is the standard setting for differentiable maps.  The assumption $r\ge1$ is the minimum regularity needed below to obtain a differentiable embedded manifold. 

\subsection{The admissible free coordinates}

The natural free coordinates of a deterministic finite-horizon trajectory are the initial state and the future inputs.  Define
\begin{equation}
    q:=\col(x_0,u_0,\ldots,u_{N-1})
    \in\R^{n+Nm}.
    \label{eq:q_coordinates}
\end{equation}
Not every $q$ must generate a valid $N$-step trajectory if $f$ is defined only on $\mathcal O$.  We therefore define the admissible parameter domain carefully.

For $q$ as in \eqref{eq:q_coordinates}, set $x_0(q)=x_0$.  Recursively, whenever the previous states are well-defined, set
\begin{equation}
    x_{k+1}(q)=f\bigl(x_k(q),u_k\bigr).
    \label{eq:rollout_recursion}
\end{equation}
Let $\cD_N\subseteq\R^{n+Nm}$ be the set of all $q$ for which
\[
    \bigl(x_k(q),u_k\bigr)\in\mathcal O,
    \qquad k=0,\ldots,N-1.
\]

\begin{lemma}[Openness and smoothness of the rollout domain]
\label{lem:rollout_smooth}
Under Assumption~\ref{ass:smooth_dynamics}, the admissible rollout domain
$\cD_N$ is an open subset of $\R^{n+Nm}$. Moreover, for every
$k=0,\ldots,N$, the rollout state
\[
    x_k:\cD_N\rightarrow\R^n
\]
is of class $C^r$.
\end{lemma}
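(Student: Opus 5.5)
The plan is to argue by induction on the horizon index, building the domain in nested stages. For $j=0,\ldots,N$ define
\[
\cD^{(j)} := \bigl\{ q\in\R^{n+Nm} \;\big|\; (x_k(q),u_k)\in\mathcal O,\ k=0,\ldots,j-1 \bigr\},
\]
so that $\cD^{(0)}=\R^{n+Nm}$ and $\cD^{(N)}=\cD_N$. The inductive claim is twofold. First, $\cD^{(j)}$ is open. Second, on $\cD^{(j)}$ the rollout states $x_0,\ldots,x_j$ are well-defined and of class $C^r$. The base case $j=0$ is immediate: the domain is the whole space, and $x_0(q)$ is a coordinate projection, which is linear and hence $C^\infty$.

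For the inductive step, suppose the claim holds for some $j<N$. Consider the map
\[
\phi_j:\cD^{(j)}\to\R^{n+m},\qquad \phi_j(q)=\bigl(x_j(q),u_j\bigr).
\]
This map is $C^r$, since its first component is $C^r$ by hypothesis and its second is a linear projection. By construction $\cD^{(j+1)}=\phi_j^{-1}(\mathcal O)$. This is the preimage of an open set under a continuous map defined on an open set, so it is open in $\cD^{(j)}$ and therefore open in $\R^{n+Nm}$.

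On $\cD^{(j+1)}$ the recursion \eqref{eq:rollout_recursion} gives $x_{j+1}=f\circ\phi_j$, which is well-defined there. It is $C^r$ by the chain rule, as a composition of the $C^r$ maps $f$ (Assumption~\ref{ass:smooth_dynamics}) and $\phi_j$ restricted to the open set $\cD^{(j+1)}$. The earlier states $x_0,\ldots,x_j$ remain $C^r$ when restricted to the smaller open set $\cD^{(j+1)}$. This closes the induction.

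Taking $j=N$ gives both conclusions of Lemma~\ref{lem:rollout_smooth}. The only point needing care, and the place I expect the main (mild) obstacle, is well-definedness. The set $\cD_N$ is defined via the recursion itself, so one must check that the nested description matches it. For every $q\in\cD_N$, the conditions for $k<j$ guarantee that $x_j(q)$ is defined before we ask whether $(x_j(q),u_j)\in\mathcal O$. The stagewise definition of $\cD^{(j)}$ handles exactly this, so no circularity arises.
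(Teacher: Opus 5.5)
Your proof is correct and matches the paper's proof: the paper also builds nested open sets $\mathcal{A}_k$ with $\mathcal{A}_{k+1}=\Psi_k^{-1}(\mathcal O)$, where $\Psi_k(q)=\col\bigl(x_k(q),u_k\bigr)$. As in your argument, it gets openness from the continuity of $\Psi_k$ and smoothness from $x_{k+1}=f\circ\Psi_k$, then concludes via $\mathcal{A}_N=\cD_N$ and by restricting each $x_k$ to $\cD_N$.
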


\begin{proof}
Let
\[
    q=\col(x_0,u_0,\ldots,u_{N-1})\in\R^{n+Nm}.
\]
We introduce intermediate admissible sets $\mathcal{A}_k$ defined by
\[
    \mathcal{A}_k
    :=
    \left\{
        q\in\R^{n+Nm}:
        (x_j(q),u_j)\in\mathcal O,\;
        j=0,\ldots,k-1
    \right\},
    \qquad k\geq 1,
\]
and set
\[
    \mathcal{A}_0:=\R^{n+Nm}.
\]
Thus, $q\in\mathcal{A}_k$ precisely when the dynamics are well-defined for the first $k$ steps of the rollout. Thus, $\mathcal{A}_0$ corresponds to the stage before any state transition has been evaluated, while $\mathcal{A}_k$, $k\geq1$, contains exactly those free coordinates for which the first $k$ transitions are admissible.

The map
\[
    q\mapsto x_0
\]
simply selects the initial-state coordinates of $q$ and is therefore linear,
hence $C^\infty$.

We now proceed recursively. Suppose that for some
$k\in\{0,\ldots,N-1\}$ the set $\mathcal{A}_k$ is open and the state
$x_k(q)$ is well-defined and $C^r$ on $\mathcal{A}_k$.

Define
\[
    \Psi_k:\mathcal{A}_k\rightarrow\R^{n+m},
    \qquad
    \Psi_k(q):=\col\bigl(x_k(q),u_k\bigr).
\]
The map $x_k(q)$ is $C^r$ by the induction hypothesis, and $u_k$ is obtained
by selecting the corresponding input coordinates of $q$. Hence $\Psi_k$ is
$C^r$ and therefore continuous.

The $(k+1)$st state can be evaluated only when the current state-input pair
lies in the domain $\mathcal O$ of the dynamics. We therefore define
\begin{equation}
    \mathcal{A}_{k+1}
    :=
    \left\{
        q\in\mathcal{A}_k:
        \Psi_k(q)\in\mathcal O
    \right\}
    =
    \Psi_k^{-1}(\mathcal O).
\end{equation}
For example,
\[
    \mathcal{A}_1
    =
    \left\{
        q\in\R^{n+Nm}:
        (x_0,u_0)\in\mathcal O
    \right\},
\]
so the first input $u_0$ is restricted whenever required by
$\mathcal O$; it is not assumed to range freely over all of $\R^m$.

Since $\mathcal O$ is open and $\Psi_k$ is continuous,
$\Psi_k^{-1}(\mathcal O)$ is open in $\mathcal{A}_k$. Because $\mathcal{A}_k$ is open in
$\R^{n+Nm}$, it follows that $\mathcal{A}_{k+1}$ is also open in
$\R^{n+Nm}$.

For every $q\in\mathcal{A}_{k+1}$, the next state is well-defined and satisfies
\[
    x_{k+1}(q)
    =
    f\bigl(x_k(q),u_k\bigr)
    =
    f\bigl(\Psi_k(q)\bigr).
\]
Since $f$ is $C^r$ on $\mathcal O$ and $\Psi_k$ is $C^r$, their composition
$x_{k+1}=f\circ\Psi_k$ is $C^r$ on $\mathcal{A}_{k+1}$.

Repeating this argument for $k=0,\ldots,N-1$ gives
\[
    \mathcal{A}_N
    =
    \left\{
        q\in\R^{n+Nm}:
        (x_k(q),u_k)\in\mathcal O,\;
        k=0,\ldots,N-1
    \right\}.
\]
By the definition of the full-horizon admissible rollout domain,
\[
    \mathcal{A}_N=\cD_N.
\]
Hence $\cD_N$ is open.

Moreover, for each $k$, the map $x_k$ is $C^r$ on $\mathcal{A}_k$. Since
\[
    \cD_N=\mathcal{A}_N\subseteq\mathcal{A}_k,
\]
the restriction of $x_k$ to $\cD_N$ is also $C^r$. Therefore every rollout
map
\[
    x_k:\cD_N\rightarrow\R^n,
    \qquad k=0,\ldots,N,
\]
is of class $C^r$.
\end{proof}

\subsection{Main manifold theorem}

Define the finite-horizon rollout parameterization
\begin{equation}
\begin{aligned}
    \Phi_N:\cD_N &\longrightarrow \R^{Nm+(N+1)n},\\
    q &\longmapsto
    \col\bigl(
        u_0,\ldots,u_{N-1},
        x_0(q),x_1(q),\ldots,x_N(q)
    \bigr).
\end{aligned}
\label{eq:Phi_definition}
\end{equation}
By construction,
\begin{equation}
    \im(\Phi_N)=\B_N^{+}.
    \label{eq:image_behavior}
\end{equation}

\begin{theorem}[Finite-horizon nonlinear behavior is an embedded manifold]
\label{thm:main_manifold}
Under Assumption~\ref{ass:smooth_dynamics}, the finite-horizon behavior $\B_N^{+}$ is a $C^r$ embedded submanifold of
\[
    \R^{Nm+(N+1)n}.
\]
Its dimension is
\begin{equation}
    \boxed{\dim \B_N^{+}=n+Nm.}
    \label{eq:behavior_dimension}
\end{equation}
Moreover, $\Phi_N$ is a $C^r$ diffeomorphism from $\cD_N$ onto $\B_N^{+}$ equipped with its embedded-submanifold structure.
\end{theorem}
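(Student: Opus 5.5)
The plan is to show that $\Phi_N$ is a $C^r$ injective immersion that is a homeomorphism onto its image, i.e.\ a $C^r$ embedding. The image of such a map is a $C^r$ embedded submanifold of dimension $\dim\cD_N=n+Nm$, and the map is a diffeomorphism onto it. Equivalently, and perhaps more transparently, I would exhibit $\B_N^{+}$ as the graph of a $C^r$ map over the open set $\cD_N$, after a permutation of coordinates. Graphs of $C^r$ maps on open sets are standard examples of $C^r$ embedded submanifolds.

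First, $C^r$ regularity of $\Phi_N$ follows directly from Lemma~\ref{lem:rollout_smooth}. The $u_k$ and $x_0$ components are coordinate projections, and each $x_k$ is $C^r$ on $\cD_N$ by the lemma.

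Second, define the linear coordinate projection
\[
\pi:\R^{Nm+(N+1)n}\to\R^{n+Nm},\qquad \col(\mathbf u,\mathbf x)\mapsto \col(x_0,u_0,\ldots,u_{N-1}).
\]
By construction, $\pi\circ\Phi_N=\id_{\cD_N}$. This gives injectivity of $\Phi_N$ at once. Differentiating also gives $D\pi\, D\Phi_N(q)=I$, so $D\Phi_N(q)$ has full column rank $n+Nm$ and $\Phi_N$ is an immersion. Moreover, $\pi|_{\B_N^{+}}$ is a continuous inverse of $\Phi_N$ on its image, so $\Phi_N$ is a homeomorphism onto $\B_N^{+}$ with the subspace topology.

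Third, I must check that $\pi(\B_N^{+})=\cD_N$ and that $\B_N^{+}$ is exactly the graph $\{(q,G(q)):q\in\cD_N\}$, with $G(q)=\col(x_1(q),\ldots,x_N(q))$, up to reordering coordinates. The main point needing care is the identity \eqref{eq:image_behavior}, namely that every element of $\B_N^{+}$ comes from an admissible $q$. Take $\mathbf w\in\B_N^{+}$. The defining equations require $(x_k,u_k)\in\mathcal O$ for every $k$, since $f$ must be evaluated there. Induction on $k$ then shows that the entries $x_k$ coincide with the rollout values $x_k(q)$ for $q=\pi(\mathbf w)$, and that $q\in\cD_N$. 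Hence $\mathbf w=\Phi_N(\pi(\mathbf w))$.

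Fourth, with the graph description in hand, the map
\[
F(q,\mathbf z)=(q,\mathbf z-G(q))
\]
is a $C^r$ diffeomorphism of the open set $\cD_N\times\R^{Nn}$ onto itself. It maps $\B_N^{+}$ onto $\cD_N\times\{0\}$, which gives slice charts. This proves that $\B_N^{+}$ is a $C^r$ embedded submanifold of dimension $n+Nm$, after composing with the coordinate permutation, which is harmless.

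In these slice charts, $\Phi_N$ reads $q\mapsto(q,0)$ and its inverse is $\pi$. Both are $C^r$, so $\Phi_N$ is a $C^r$ diffeomorphism onto $\B_N^{+}$. I expect the only real obstacle to be the bookkeeping in the third step, ensuring that the domain issue for a non-globally-defined $f$ is handled consistently with Definition~\ref{def:augmented_behavior}. The remaining steps are routine.
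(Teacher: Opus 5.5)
Your proposal is correct and follows essentially the same route as the paper. You get $C^r$ regularity from Lemma~\ref{lem:rollout_smooth}, injectivity and full column rank from the left inverse $\pi$ (your identity $D\pi\,D\Phi_N(q)=I$ is the paper's identity block in the permuted graph form), and continuity of the inverse from $\pi|_{\B_N^{+}}$. Your two additions only make self-contained what the paper leaves implicit: the explicit check that $\im\Phi_N=\B_N^{+}$ replaces the paper's ``by construction'', and the global slice chart $F(q,\mathbf z)=(q,\mathbf z-G(q))$ replaces its citation of the embedding theorem in Lee.
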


\begin{proof}
The proof consists of four explicit steps.

\paragraph{Step 1: $\Phi_N$ is $C^r$.}
By Lemma~\ref{lem:rollout_smooth}, every map $q\mapsto x_k(q)$ is $C^r$ on $\cD_N$.  The remaining components of $\Phi_N$ are coordinate projections $q\mapsto u_k$.  Therefore $\Phi_N$ is $C^r$.

\paragraph{Step 2: $\Phi_N$ is injective.}
Suppose $q^a,q^b\in\cD_N$ satisfy
\[
    \Phi_N(q^a)=\Phi_N(q^b).
\]
The vector $\Phi_N(q)$ explicitly contains the initial state $x_0$ and every input $u_0,\ldots,u_{N-1}$.  Equality of the two behavior vectors therefore implies
\[
    x_0^a=x_0^b,
    \qquad
    u_k^a=u_k^b\quad \forall k=0,\ldots,N-1.
\]
By the definition of $q$ in \eqref{eq:q_coordinates}, this gives $q^a=q^b$.  Thus $\Phi_N$ is one-to-one.

\paragraph{Step 3: $\Phi_N$ is an immersion of constant rank $n+Nm$.}
Let
\[
    d:=n+Nm.
\]
There is a fixed permutation matrix $P$ that rearranges the entries of $\Phi_N(q)$ so that the free coordinates $q$ appear first.  Hence
\begin{equation}
    P\Phi_N(q)
    =
    \begin{bmatrix}
        q\\
        G_N(q)
    \end{bmatrix},
    \label{eq:graph_form}
\end{equation}
where
\[
    G_N(q):=\col\bigl(x_1(q),\ldots,x_N(q)\bigr)\in\R^{Nn}.
\]
Differentiating \eqref{eq:graph_form} gives
\begin{equation}
    D(P\Phi_N)(q)
    =
    \begin{bmatrix}
        I_d\\[1mm]
        DG_N(q)
    \end{bmatrix}.
    \label{eq:jacobian_graph}
\end{equation}
The upper block is the $d\times d$ identity.  Therefore the columns of \eqref{eq:jacobian_graph} are linearly independent and
\begin{equation}
    \rank D(P\Phi_N)(q)=d.
\end{equation}
Since a permutation matrix is invertible,
\[
    \rank D\Phi_N(q)=d=n+Nm
\]
for every $q\in\cD_N$.  Thus $\Phi_N$ is an immersion.

\paragraph{Step 4: $\Phi_N$ is a homeomorphism onto its image.}
Define the linear coordinate projection
\begin{equation}
    \pi:\R^{Nm+(N+1)n}\rightarrow\R^{n+Nm}
\end{equation}
that extracts $(x_0,u_0,\ldots,u_{N-1})$ from a behavior vector.  By construction,
\begin{equation}
    \pi\circ\Phi_N=\operatorname{Id}_{\cD_N}
    \label{eq:left_inverse}
\end{equation}
where $\operatorname{Id}_{\cD_N}(q)=q$ denotes the identity map on $\cD_N$. Because $\Phi_N$ is injective, the restriction
\[
    \pi\big|_{\B_N^{+}}:\B_N^{+}\rightarrow\cD_N
\]
is the inverse of $\Phi_N$.  The map $\pi$ is linear and therefore continuous, so its restriction to $\B_N^{+}$ is continuous.  Hence $\Phi_N$ is a homeomorphism between $\cD_N$ and its image $\B_N^{+}$.

We have shown that $\Phi_N$ is a $C^r$ immersion and a homeomorphism onto its image.  It is therefore a $C^r$ embedding.  A standard result in differential topology states that the image of a smooth embedding is an embedded submanifold, with dimension equal to the dimension of the embedding domain \cite[Chs.~4--5]{lee2013manifolds}.  Since $\cD_N$ is open in $\R^d$, its dimension is $d=n+Nm$.  Thus
\[
    \dim\B_N^{+}=n+Nm.
\]
Finally, the inverse map on the image is the coordinate projection $\pi|_{\B_N^{+}}$; in the induced embedded-submanifold structure this inverse is smooth.  Consequently $\Phi_N$ is a $C^r$ diffeomorphism from $\cD_N$ onto $\B_N^{+}$.
\end{proof}

\begin{remark}
The manifold characterization above does not require controllability,
stabilizability, stability, or invertibility of the dynamics.  These
properties become relevant to subsequent questions such as reachability
and closed-loop control, but are not required for the existence of the
finite-horizon behavior manifold itself.
\end{remark}

\begin{remark}[Why controllability is absent]
Controllability enters Willems et al.'s finite-data trajectory-spanning
result---now commonly called the Fundamental Lemma---because that result asks
when the windows of a measured LTI trajectory span the entire
finite-horizon behavior \cite{willems2005pe}.  It is not an assumption needed
for the underlying behavioral definition of a dynamical system.  Theorem~\ref{thm:main_manifold} asks a different question: what is the geometry of the exact behavior generated by a known deterministic recursion?  Since $(x_0,\mathbf u)$ are retained as coordinates of the trajectory itself, no controllability assumption is needed to establish the manifold structure.
\end{remark}

\subsection{Equivalent regular-level-set viewpoint}

The previous proof is constructive and gives an explicit global parameterization.  The same dimension can also be seen directly from the dynamic constraints.

Define
\begin{equation}
    \Gamma_N(\mathbf w)
    :=
    \col\Bigl(
       x_1-f(x_0,u_0),
       \ldots,
       x_N-f(x_{N-1},u_{N-1})
    \Bigr)
    \in\R^{Nn}.
    \label{eq:constraint_map}
\end{equation}
On the open subset of signal space where all evaluations of $f$ are valid,
\begin{equation}
    \B_N^{+}=\Gamma_N^{-1}(0).
\end{equation}
Consider the Jacobian of $\Gamma_N$ with respect to the future-state variables $(x_1,\ldots,x_N)$.  It has the block lower-bidiagonal form
\begin{equation}
\frac{\partial\Gamma_N}{\partial(x_1,\ldots,x_N)}
=
\begin{bmatrix}
I_n & 0 & \cdots & 0\\
-D_xf(x_1,u_1) & I_n & \ddots & \vdots\\
0 & \ddots & \ddots & 0\\
\vdots & \ddots & -D_xf(x_{N-1},u_{N-1}) & I_n
\end{bmatrix}.
\label{eq:block_jacobian}
\end{equation}
This matrix is block triangular with identity blocks on the diagonal, hence it is nonsingular.  Therefore
\[
    \rank D\Gamma_N(\mathbf w)=Nn
\]
at every feasible trajectory.  Thus $0$ is a regular value of $\Gamma_N$.  The regular-level-set theorem then yields an embedded submanifold of codimension $Nn$ \cite[Ch.~5]{lee2013manifolds}, and hence
\begin{align}
    \dim\B_N^{+}
    &=\bigl(Nm+(N+1)n\bigr)-Nn\\
    &=n+Nm.
\end{align}
This gives an independent check of Theorem~\ref{thm:main_manifold}.

\subsection{Tangent space and the linearized finite-horizon behavior}

The tangent space of the behavior manifold has a direct dynamical
interpretation. Let
\[
    \mathbf w=\Phi_N(q)\in\mathfrak B_N^+
\]
be a feasible trajectory, and define along this trajectory
\[
    A_k:=D_x f(x_k,u_k),
    \qquad
    B_k:=D_u f(x_k,u_k),
    \qquad k=0,\ldots,N-1.
\]

\begin{corollary}[Tangent space as the linearized behavior]
\label{cor:tangent_behavior}
Under Assumption~\ref{ass:smooth_dynamics},
\[
T_{\mathbf w}\mathfrak B_N^+
=
\left\{
\operatorname{col}
(\delta u_0,\ldots,\delta u_{N-1},
 \delta x_0,\ldots,\delta x_N)
:
\delta x_{k+1}
=
A_k\delta x_k+B_k\delta u_k
\right\}.
\]
Thus the tangent space at a nonlinear trajectory is precisely the
finite-horizon behavior of the dynamics linearized along that trajectory.
\end{corollary}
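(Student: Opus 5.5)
Since $\B_N^+$ is a $C^r$ embedded submanifold and $\Phi_N:\cD_N\to\B_N^+$ is a $C^r$ diffeomorphism (Theorem~\ref{thm:main_manifold}), the tangent space at $\mathbf w=\Phi_N(q)$ is the image of the differential:
\[
T_{\mathbf w}\B_N^+=\im D\Phi_N(q)\subseteq\R^{Nm+(N+1)n}.
\]
The plan is to compute this image explicitly and show it equals the right-hand side, which I denote $\mathcal L_{\mathbf w}$.

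\emph{Inclusion $\im D\Phi_N(q)\subseteq\mathcal L_{\mathbf w}$.} Take a direction $\delta q=\col(\delta x_0,\delta u_0,\ldots,\delta u_{N-1})$. The components of $D\Phi_N(q)\delta q$ are $\delta u_k$ (coordinate projections), $\delta x_0$, and $\delta x_k:=Dx_k(q)\delta q$. By Lemma~\ref{lem:rollout_smooth} each $x_k$ is $C^r$ with $r\ge1$ on the open set $\cD_N$, and $x_{k+1}=f(x_k(q),u_k)$ there. The chain rule then gives
\[
\delta x_{k+1}=A_k\,\delta x_k+B_k\,\delta u_k ,
\]
so every tangent vector satisfies the linearized recursion.

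\emph{Reverse inclusion.} $\mathcal L_{\mathbf w}$ is a linear subspace. Given any $\delta x_0$ and $\delta u_0,\ldots,\delta u_{N-1}$, the recursion determines $\delta x_1,\ldots,\delta x_N$ uniquely, so $\mathcal L_{\mathbf w}$ is isomorphic to $\R^{n+Nm}$ via $(\delta x_0,\delta\mathbf u)$. Hence $\dim\mathcal L_{\mathbf w}=n+Nm$. Step~3 of Theorem~\ref{thm:main_manifold} gives $\dim\im D\Phi_N(q)=n+Nm$. A subspace of equal finite dimension must coincide, so equality holds.

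As a cross-check, one can use the level-set description: $T_{\mathbf w}\B_N^+=\ker D\Gamma_N(\mathbf w)$, and the rows of $D\Gamma_N$ are exactly the constraints $\delta x_{k+1}-A_k\delta x_k-B_k\delta u_k=0$. This requires working on the open set where $\Gamma_N$ is defined. It also requires noting that $\B_N^+$ is closed there relative to that set, so that the regular-value theorem applies. I regard the parameterization route as cleaner for that reason.

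There is no serious obstacle. The only point needing care is that the chain rule is applied on the open domain $\cD_N$, which Lemma~\ref{lem:rollout_smooth} guarantees. One should also note that the intrinsic tangent space agrees with $\im D\Phi_N(q)$ for an embedded submanifold of class $C^r$ with $r\ge1$.
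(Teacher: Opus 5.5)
Your proof is correct and follows the paper's argument: identify $T_{\mathbf w}\B_N^{+}$ with $\im D\Phi_N(q)$, use the chain rule on the rollout recursion for one inclusion, and use unique solvability of the linearized recursion from $(\delta x_0,\delta\mathbf u)$ for the other. The only difference is cosmetic: you close the reverse inclusion by a dimension count against the rank from Step~3, whereas the paper observes directly that any element of the right-hand side equals $D\Phi_N(q)\delta q$, with $\delta q$ taken to be that element's own free coordinates.
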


\begin{proof}
Since $\Phi_N:\mathcal D_N\rightarrow\mathfrak B_N^+$ is a smooth
embedding, its differential at $q$,
\[
    d\Phi_N|_q:
    T_q\mathcal D_N
    \longrightarrow
    T_{\mathbf w}\mathfrak B_N^+,
\]
maps infinitesimal changes in the free coordinates to infinitesimal changes
of the corresponding trajectory. Because $\mathcal D_N$ is open in
$\R^{n+Nm}$,
\[
    T_q\mathcal D_N\simeq\R^{n+Nm}.
\]
In Euclidean coordinates, the differential $d\Phi_N|_q$ is represented by
the Jacobian matrix $D\Phi_N(q)$. Since $\Phi_N$ is an embedding,
\[
    T_{\mathbf w}\mathfrak B_N^+
    =
    \operatorname{im}D\Phi_N(q).
\]

Let
\[
    \delta q
    =
    \col(\delta x_0,\delta u_0,\ldots,\delta u_{N-1})
    \in T_q\mathcal D_N.
\]
Differentiating the rollout recursion
\[
    x_{k+1}=f(x_k,u_k)
\]
in the direction $\delta q$ gives
\[
    \delta x_{k+1}
    =
    D_xf(x_k,u_k)\delta x_k
    +
    D_uf(x_k,u_k)\delta u_k
    =
    A_k\delta x_k+B_k\delta u_k.
\]
Thus every tangent vector generated by $D\Phi_N(q)$ satisfies the
linearized dynamics.

Conversely, any choice of
$(\delta x_0,\delta u_0,\ldots,\delta u_{N-1})$
uniquely determines $\delta x_1,\ldots,\delta x_N$ through this recursion.
The resulting trajectory variation is precisely
$D\Phi_N(q)\delta q$. Hence the tangent space is exactly the set of
finite-horizon trajectories of the linearized system.
\end{proof}

Using the regular-level-set representation of Section~3.4 gives the
equivalent characterization
\[
    T_{\mathbf w}\mathfrak B_N^+
    =
    \ker D\Gamma_N(\mathbf w).
\]
With the standard Euclidean inner product on the ambient trajectory space,
the corresponding normal space is
\[
    N_{\mathbf w}\mathfrak B_N^+
    =
    \left(\ker D\Gamma_N(\mathbf w)\right)^\perp
    =
    \operatorname{im}D\Gamma_N(\mathbf w)^\top.
\]
Here the transpose uses the Euclidean inner product to identify the
constraint covectors with normal vectors.

\begin{remark}
This is the finite-dimensional discrete-time counterpart of the
continuous-time trajectory-manifold characterization of Hauser and Meyer
\cite{hauser1998trajectory}, in which the tangent space consists of bounded
trajectories of the variational dynamics along the nominal trajectory.
\end{remark}

\section{Consequences for predictive control coordinates}

\subsection{Behavior conditioned on the measured current state}

In a receding-horizon controller, the current state is measured and fixed.  For $\bar x\in\R^n$, define
\begin{equation}
    \B_N^{+}(\bar x)
    :=
    \left\{
        \mathbf w\in\B_N^{+}:x_0=\bar x
    \right\}.
\end{equation}
Let $\cU_N(\bar x)\subseteq\R^{Nm}$ be the set of future input sequences for which the corresponding $N$-step rollout from $\bar x$ is admissible.

\begin{corollary}[Fixed-initial-state behavior]
\label{cor:fixed_state}
Under Assumption~\ref{ass:smooth_dynamics}, if $\cU_N(\bar x)$ is nonempty then it is open and
\[
    \B_N^{+}(\bar x)
\]
is a $C^r$ embedded submanifold parameterized by the future input sequence $\mathbf u$.  Its dimension is
\begin{equation}
    \boxed{\dim\B_N^{+}(\bar x)=Nm.}
\end{equation}
\end{corollary}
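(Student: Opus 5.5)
The plan is to reduce Corollary~\ref{cor:fixed_state} to the same argument used for Theorem~\ref{thm:main_manifold}, applied with $x_0$ frozen at $\bar x$. I would identify $\cU_N(\bar x)$ with the slice of the rollout domain, that is, $\cU_N(\bar x)=\iota^{-1}(\cD_N)$, where
\[
\iota:\R^{Nm}\to\R^{n+Nm},\qquad \iota(\mathbf u):=\col(\bar x,\mathbf u).
\]
This identification holds directly from the definitions, because admissibility of the rollout from $\bar x$ with input $\mathbf u$ is exactly the condition $\col(\bar x,\mathbf u)\in\cD_N$. The map $\iota$ is affine and hence continuous, and $\cD_N$ is open by Lemma~\ref{lem:rollout_smooth}. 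Therefore $\cU_N(\bar x)$ is open in $\R^{Nm}$. Nonemptiness is assumed only so that the resulting manifold has a well-defined dimension $Nm$ rather than being empty.

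Next I would define $\Phi_N^{\bar x}:=\Phi_N\circ\iota:\cU_N(\bar x)\to\R^{Nm+(N+1)n}$. Its image is exactly $\B_N^{+}(\bar x)$, since by \eqref{eq:image_behavior} every element of $\B_N^{+}$ with $x_0=\bar x$ equals $\Phi_N(\col(\bar x,\mathbf u))$ for an admissible $\mathbf u$. The four steps of the proof of Theorem~\ref{thm:main_manifold} then carry over.
\begin{enumerate}[label=(\roman*)]
\item $\Phi_N^{\bar x}$ is $C^r$, because it is a composition of a $C^r$ map with an affine map.
\item It is injective, because $\mathbf u$ appears explicitly among the coordinates of the output.
\item After the same permutation $P$ as in \eqref{eq:graph_form}, its Jacobian contains the block $I_{Nm}$ in the $\mathbf u$-rows, so it has rank $Nm$ everywhere.
\item The linear projection $\mathbf w\mapsto\mathbf u$, restricted to $\B_N^{+}(\bar x)$, is a continuous inverse.
\end{enumerate}
Together these show that $\Phi_N^{\bar x}$ is a $C^r$ embedding. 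Its image $\B_N^{+}(\bar x)$ is therefore an embedded submanifold of dimension $Nm$, parameterized globally by $\mathbf u$.

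An alternative route, which I would note as a cross-check, is the following. The set $\B_N^{+}(\bar x)$ equals $\B_N^{+}\cap\{x_0=\bar x\}$, and the map $\mathbf w\mapsto x_0$ restricted to $\B_N^{+}$ is a submersion. This is because in the global chart $\pi|_{\B_N^{+}}$ it is just a coordinate projection. Its level set is then an embedded submanifold of $\B_N^{+}$ of codimension $n$, giving dimension $n+Nm-n=Nm$. An embedded submanifold of an embedded submanifold is embedded in the ambient space.

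I expect no step to be a real obstacle. The only point that needs care is the identification $\cU_N(\bar x)=\iota^{-1}(\cD_N)$, together with the check that the image of $\Phi_N^{\bar x}$ is all of $\B_N^{+}(\bar x)$ and not merely contained in it. Both follow by unwinding the recursive definition of admissibility.
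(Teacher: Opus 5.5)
Your proposal is correct and follows the paper's proof: restrict the rollout parameterization to the slice $x_0=\bar x$ and rerun the four steps of Theorem~\ref{thm:main_manifold}, where the $I_{Nm}$ block in the Jacobian gives rank $Nm$. Your identification $\cU_N(\bar x)=\iota^{-1}(\cD_N)$ spells out the openness claim that the paper leaves implicit, and your submersion argument is a valid cross-check but is not needed.
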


\begin{proof}
Fixing $x_0=\bar x$ removes the $n$ free initial-state coordinates from $q$.  The resulting map
\[
    \mathbf u\mapsto
    \col\bigl(\mathbf u,\bar x,x_1(\mathbf u),\ldots,x_N(\mathbf u)\bigr)
\]
has the same graph form as \eqref{eq:graph_form}, now with an $Nm\times Nm$ identity block in its Jacobian.  The proof of Theorem~\ref{thm:main_manifold} therefore applies verbatim with domain dimension $Nm$.
\end{proof}

In the tangent-space characterization of
Corollary~\ref{cor:tangent_behavior}, fixing the initial state simply imposes $\delta x_0=0$.\\

\begin{remark}[Interpretation for online latent optimization]
The full offline behavior has dimension $n+Nm$, while the feasible behavior consistent with a measured current state is an $Nm$-dimensional slice.  An online constraint enforcing $x_0=x(t)$ can therefore be interpreted geometrically as restricting the optimizer from the full behavior manifold to this fixed-initial-state submanifold.
\end{remark}

\subsection{Canonical exact encoder and decoder}

The proof of Theorem~\ref{thm:main_manifold} provides exact global coordinates for the behavior.

\begin{corollary}[Canonical behavior coordinates]
\label{cor:canonical_coordinates}
Define
\begin{align}
    E_\star:\B_N^{+}&\rightarrow\cD_N,
    &E_\star(\mathbf w)&:=\col(x_0,u_0,\ldots,u_{N-1}),\\
    D_\star:\cD_N&\rightarrow\B_N^{+},
    &D_\star(q)&:=\Phi_N(q).
\end{align}
Then
\begin{equation}
    E_\star\circ D_\star=\id_{\cD_N},
    \qquad
    D_\star\circ E_\star=\id_{\B_N^{+}}.
\end{equation}
Hence $E_\star$ and $D_\star$ form an exact $C^r$ encoder--decoder pair for the deterministic behavior.
\end{corollary}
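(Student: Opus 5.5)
The plan is to read everything off Theorem~\ref{thm:main_manifold} and the projection $\pi$ introduced in Step~4 of its proof. First I would check that the definitions make sense. By construction $D_\star=\Phi_N$ maps $\cD_N$ into $\im(\Phi_N)=\B_N^{+}$, using \eqref{eq:image_behavior}. The encoder $E_\star$ is exactly the restriction $\pi|_{\B_N^{+}}$.

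To see that $E_\star$ actually lands in $\cD_N$, take any $\mathbf w\in\B_N^{+}$. Every required evaluation of $f$ is well-defined and $x_{k+1}=f(x_k,u_k)$ holds. An induction on $k$ using the rollout recursion \eqref{eq:rollout_recursion} then shows that the rollout states $x_k(q)$ of $q=\pi(\mathbf w)$ coincide with the entries $x_k$ of $\mathbf w$. Consequently $(x_k(q),u_k)\in\mathcal O$ for all $k$, so $q\in\cD_N$, and the same induction gives $\Phi_N(\pi(\mathbf w))=\mathbf w$.

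The two identities then follow directly. The identity $E_\star\circ D_\star=\id_{\cD_N}$ is precisely \eqref{eq:left_inverse}. The identity $D_\star\circ E_\star=\id_{\B_N^{+}}$ is the computation just made. Alternatively, one can argue from surjectivity of $\Phi_N$ onto $\B_N^{+}$: writing $\mathbf w=\Phi_N(q)$, we get $\Phi_N(\pi(\Phi_N(q)))=\Phi_N(q)$.

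For regularity, $D_\star$ is a $C^r$ diffeomorphism onto $\B_N^{+}$ by Theorem~\ref{thm:main_manifold}, so $E_\star=D_\star^{-1}$ is also $C^r$. A more concrete argument is that $E_\star$ is the restriction of the linear map $\pi$ to an embedded $C^r$ submanifold, and restrictions of smooth ambient maps to embedded submanifolds are smooth. This establishes the exact $C^r$ encoder--decoder pair.

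The only step that needs real care is the claim that $\pi(\B_N^{+})\subseteq\cD_N$ together with the recovery $\Phi_N\circ\pi=\id$ on $\B_N^{+}$. This is where the admissibility caveat in Definition~\ref{def:augmented_behavior} has to be used. I would handle it by the explicit induction above rather than relying only on \eqref{eq:image_behavior}.
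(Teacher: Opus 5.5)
Your proposal is correct and follows essentially the same route as the paper: the first identity is \eqref{eq:left_inverse}, and the second comes from re-rolling the dynamics from the coordinates $(x_0,\mathbf u)$ read off $\mathbf w$. Your explicit induction showing $\pi(\B_N^{+})\subseteq\cD_N$, together with your derivation of $C^r$ regularity of $E_\star$ from Theorem~\ref{thm:main_manifold}, makes precise two points that the paper's proof leaves implicit.
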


\begin{proof}
The first identity is exactly \eqref{eq:left_inverse}.  For the second, every $\mathbf w\in\B_N^{+}$ is by definition generated by a unique pair $(x_0,\mathbf u)$ because those coordinates are explicitly present in $\mathbf w$.  Re-rolling the deterministic dynamics from those coordinates therefore reproduces the same future states and hence the same $\mathbf w$.
\end{proof}

The maps in Corollary~\ref{cor:canonical_coordinates} are not proposed as a practical model-free controller because $D_\star$ evaluates the unknown dynamics.  Their role is theoretical: they show that a smooth finite-dimensional decoder exists before any neural-network approximation argument is invoked.

\subsection{A lower bound on exact latent dimension}

\begin{corollary}[Latent dimension required for exact differentiable reconstruction]
\label{cor:latent_lower_bound}
Let $p\in\mathbb Z_{>0}$.  Suppose there are $C^1$ maps
\[
    E:\B_N^{+}\rightarrow\R^p,
    \qquad
    D:\mathcal A\subseteq\R^p\rightarrow\R^{Nm+(N+1)n},
\]
with $E(\B_N^{+})\subseteq\mathcal A$ such that
\begin{equation}
    D\circ E=\id_{\B_N^{+}}.
    \label{eq:exact_reconstruction}
\end{equation}
Then necessarily
\begin{equation}
    \boxed{p\ge n+Nm.}
\end{equation}
\end{corollary}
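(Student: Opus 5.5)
We pull the identity \eqref{eq:exact_reconstruction} back to the free coordinates and compare ranks of Jacobians via the chain rule. By Theorem~\ref{thm:main_manifold}, $\Phi_N:\cD_N\to\B_N^{+}$ is a $C^r$ diffeomorphism with $r\ge1$, and $\cD_N$ is open in $\R^{n+Nm}$. Fix any $q\in\cD_N$; the paper assumes $\cD_N$ is nonempty, otherwise the statement is vacuous. Set $\tilde E:=E\circ\Phi_N:\cD_N\to\R^p$. Composing \eqref{eq:exact_reconstruction} with $\Phi_N$ gives
\[
    D\circ\tilde E=\Phi_N \quad\text{on } \cD_N .
\]

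\textbf{Regularity.} Here $E$ is $C^1$ on the embedded submanifold $\B_N^{+}$ in the standard sense. Equivalently, it has local $C^1$ extensions to open sets of the ambient space, or it is $C^1$ in the chart $\Phi_N$. In either reading, $\tilde E$ is $C^1$ on the open set $\cD_N$.

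For $D$, we need it differentiable at the points $E(\mathbf w)$. If $\mathcal A$ is open, this is immediate. If $\mathcal A$ is not open, we interpret $C^1$ as admitting a local $C^1$ extension near each point of $E(\B_N^{+})$. We then replace $D$ by such an extension on a neighborhood of $\tilde E(q)$. This does not change the identity $D\circ\tilde E=\Phi_N$ near $q$, because $\tilde E$ takes values in $\mathcal A$ where the extension agrees with $D$. Making this convention explicit is the only delicate point; the rest is linear algebra.

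\textbf{Rank comparison.} The chain rule at $q$ gives
\[
    D\Phi_N(q)=DD\bigl(\tilde E(q)\bigr)\,D\tilde E(q).
\]
Step~3 of the proof of Theorem~\ref{thm:main_manifold} shows that $\rank D\Phi_N(q)=n+Nm$. The matrix $D\tilde E(q)$ is $p\times(n+Nm)$, so
\[
    n+Nm=\rank D\Phi_N(q)\le\rank D\tilde E(q)\le p .
\]

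\textbf{Intrinsic version.} The same argument can be phrased on the manifold. Since $d(D\circ E)_{\mathbf w}=\id$ on $T_{\mathbf w}\B_N^{+}$, the differential $dE_{\mathbf w}:T_{\mathbf w}\B_N^{+}\to\R^p$ is injective. Its domain has dimension $n+Nm$ by Theorem~\ref{thm:main_manifold}, which again forces $p\ge n+Nm$.
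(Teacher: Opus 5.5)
Your proposal is correct and is essentially the paper's argument. The paper differentiates $D\circ E=\id_{\B_N^{+}}$ on $T_{\mathbf w}\B_N^{+}$ and observes that a rank-$(n+Nm)$ map factoring through $\R^p$ forces $p\ge n+Nm$, which is exactly your ``intrinsic version''; your pullback via $\Phi_N$ is its coordinate form, and your handling of a non-open $\mathcal A$ makes explicit a point the paper leaves implicit. One small correction: if $\cD_N=\emptyset$ the statement is not vacuous but false, since empty maps exist for every $p$. So nonemptiness of $\B_N^{+}$ is a genuine implicit hypothesis, which the paper also uses when it fixes $\mathbf w\in\B_N^{+}$.
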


\begin{proof}
Fix $\mathbf w\in\B_N^{+}$.  Differentiate \eqref{eq:exact_reconstruction} along the tangent space $T_{\mathbf w}\B_N^{+}$.  By the chain rule,
\begin{equation}
    \dd D_{E(\mathbf w)}\circ \dd E_{\mathbf w}
    =
    I_{T_{\mathbf w}\B_N^{+}}.
    \label{eq:tangent_identity}
\end{equation}
The right-hand side has rank
\[
    \dim T_{\mathbf w}\B_N^{+}=n+Nm.
\]
The rank of the composition on the left cannot exceed the dimension $p$ of the intermediate latent space.  Therefore
\[
    n+Nm\le p.
\]
\end{proof}

\begin{remark}
Corollary~\ref{cor:latent_lower_bound} is an exact $C^1$ reconstruction statement.  It does not by itself rule out useful lower-dimensional approximate representations over restricted data distributions.  It does show that a globally exact differentiable autoencoder for the full deterministic behavior cannot have latent dimension below the intrinsic behavior dimension.
\end{remark}

\section{Consistency with the LTI behavior used in DeePC}

Consider the LTI special case
\begin{equation}
    x_{k+1}=Ax_k+Bu_k.
    \label{eq:lti_state}
\end{equation}
Then every $x_k$ is a linear function of $(x_0,u_0,\ldots,u_{k-1})$, so $\Phi_N$ is a linear map.  Since Theorem~\ref{thm:main_manifold} shows that $\Phi_N$ is injective,
\begin{equation}
    \B_N^{+}=\im(\Phi_N)
\end{equation}
is a linear subspace with
\begin{equation}
    \dim\B_N^{+}=n+Nm.
\end{equation}
Thus the nonlinear theorem reduces to the familiar linear-behavior geometry: the linear subspace becomes a smooth nonlinear embedded manifold.

The role of the Fundamental Lemma is complementary. For a controllable LTI
behavior and a persistently exciting measured input, sufficiently long Hankel
data span the entire truncated behavior \cite{willems2005pe}. DeePC uses this
result to represent the relevant finite-horizon behavior directly from data
\cite{coulson2019deepc}. Theorem~3.3 addresses a different question: it
characterizes the geometry of the exact nonlinear finite-horizon behavior,
without making a finite-data spanning claim.

\begin{remark}[Output-only behaviors and the DeePC lag]
The clean injectivity argument in Theorem~\ref{thm:main_manifold} uses the fact that $x_0$ itself is retained in the behavior vector.  If only an output $y=h(x,u)$ is retained, two distinct internal states may generate the same finite output window.  Additional observability assumptions are then needed before $(x_0,\mathbf u)$ can serve as coordinates for an input/output behavior.

For minimal LTI systems, DeePC handles exactly this issue through the system lag $\ell(\B)$: an initial input/output trajectory of length $T_{\mathrm{ini}}\ge\ell(\B)$ fixes the compatible state uniquely \cite{coulson2019deepc}.  A nonlinear output-only extension would require an analogous finite-horizon observability/injectivity condition.  No such condition is needed in the full-state setting considered in this note.
\end{remark}

\section{Sampled continuous-time systems}

Many physical plants are naturally described in continuous time.  Consider
\begin{equation}
    \dot x(t)=F\bigl(x(t),u(t)\bigr),
    \label{eq:continuous_system}
\end{equation}
with $x\in\R^n$ and $u\in\R^m$.  Predictive control uses a finite-dimensional input parameterization, most commonly a zero-order hold.  Let the control be constant on each interval of length $\Delta>0$.

For a fixed constant input $u$, let
\begin{equation}
    S_\Delta(x,u)
\end{equation}
denote the state reached after flowing \eqref{eq:continuous_system} for time $\Delta$ from initial state $x$ while holding $u$ fixed.

\begin{assumption}[Smooth continuous-time vector field]
\label{ass:ct}
The vector field $F$ is $C^r$, $r\ge1$, on an open state-input domain, and the initial-state/input pairs under consideration admit a unique solution over the sampling interval $[0,\Delta]$.
\end{assumption}

\begin{lemma}[Smooth sampled transition map]
\label{lem:sampled_map}
Under Assumption~\ref{ass:ct}, the sampled transition map
\[
    S_\Delta:(x,u)\mapsto x(\Delta;x,u)
\]
is $C^r$ on its domain of definition.
\end{lemma}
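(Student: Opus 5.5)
The plan is to reduce the claim to the standard theorem on differentiable dependence of ODE solutions on initial data and parameters. We treat the held input $u$ as a parameter by augmenting the state. On the open state-input domain $\mathcal O_c\subseteq\R^{n+m}$ where $F$ is $C^r$, define the augmented autonomous vector field
\[
    \widetilde F(x,u):=\col\bigl(F(x,u),0\bigr)\in\R^{n+m}.
\]
It is $C^r$ on $\mathcal O_c$. Its flow from $(x,u)$ keeps the second component constant, equal to $u$, and its first component solves \eqref{eq:continuous_system} with the input held at $u$. By the uniqueness part of Assumption~\ref{ass:ct}, this first component coincides with $x(t;x,u)$. Hence $S_\Delta(x,u)$ is the $x$-component of the time-$\Delta$ flow $\varphi_\Delta(x,u)$ of $\widetilde F$.

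Next we invoke the fundamental theorem on flows. For a $C^r$ vector field with $r\ge1$ on an open set, the flow domain $\mathcal E\subseteq\R\times\mathcal O_c$ is open and the flow $\varphi:\mathcal E\to\mathcal O_c$ is $C^r$ (e.g.\ \cite[Ch.~9]{lee2013manifolds}, or the classical Peano/Hartman results on differentiability with respect to initial conditions). The domain of definition of $S_\Delta$ is exactly the slice
\[
    \mathcal O_\Delta=\{(x,u):(\Delta,(x,u))\in\mathcal E\}.
\]
This slice is open, and $\varphi_\Delta$ is $C^r$ on it. Composing with the linear projection onto the first $n$ coordinates then shows that $S_\Delta$ is $C^r$.

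The main obstacle is the regularity step itself. $C^1$ dependence is classical. Lifting it to $C^r$ proceeds by induction on $r$. The first variational equation
\[
    \dot Y=D\widetilde F(\varphi_t)\,Y,\qquad Y(0)=I,
\]
has a coefficient that is $C^{r-1}$ in $(t,x,u)$. The pair $(\varphi_t,Y)$ is then the flow of a $C^{r-1}$ system, so the induction hypothesis shows $D\varphi_\Delta$ is $C^{r-1}$, and hence $\varphi_\Delta$ is $C^r$. We will cite this rather than reprove it.

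One remaining subtlety is ensuring the solution stays in $\mathcal O_c$ on $[0,\Delta]$; this is built into the definition of $\mathcal E$. We interpret ``domain of definition'' in the statement as this open set $\mathcal O_\Delta$, which also supplies the open set $\mathcal O$ needed to apply Assumption~\ref{ass:smooth_dynamics} and Theorem~\ref{thm:main_manifold} to $f=S_\Delta$.
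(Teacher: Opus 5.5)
Your proposal is correct and follows the paper's argument. Both augment the state with $\dot u=0$ (equivalently, treat the held input as a parameter), invoke $C^r$ dependence of ODE solutions on initial data and parameters, and evaluate the flow at time $\Delta$. Your extra details are accurate refinements of what the paper leaves to the Hartman citation: the domain of $S_\Delta$ is the open time-$\Delta$ slice of the flow domain, and the variational-equation induction lifts $C^1$ to $C^r$. Hartman is also the right reference for finite $r$, because the flow theorem in Lee's Chapter 9 is stated for $C^\infty$ vector fields.
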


\begin{proof}
Treat the held input $u$ as a constant parameter of the ODE.  Standard smooth-dependence results for ordinary differential equations state that solutions inherit $C^r$ dependence on initial conditions and parameters from a $C^r$ vector field, on the domain where the corresponding solution exists \cite[Ch.~5]{hartman2002ode}.  Equivalently, one may augment the state with $\dot u=0$ and apply the standard smooth-flow theorem to the augmented vector field.  Evaluating this smooth solution map at time $\Delta$ yields $S_\Delta\in C^r$.
\end{proof}

\begin{corollary}[Trajectory manifold for sampled continuous-time systems]
\label{cor:ct_manifold}
Under Assumption~\ref{ass:ct}, the zero-order-hold sampled system
\begin{equation}
    x_{k+1}=S_\Delta(x_k,u_k)
\end{equation}
has a finite-horizon behavior that is a $C^r$ embedded submanifold of dimension
\begin{equation}
    \boxed{n+Nm.}
\end{equation}
\end{corollary}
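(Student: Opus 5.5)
The plan is to reduce Corollary~\ref{cor:ct_manifold} to Theorem~\ref{thm:main_manifold} by checking that the sampled transition map satisfies Assumption~\ref{ass:smooth_dynamics}. Concretely, I will take $f:=S_\Delta$ and $\mathcal O$ to be the domain of definition of $S_\Delta$, namely the set of pairs $(x,u)$ such that $(x,u)$ lies in the open state-input domain of $F$ and the solution of $\dot x=F(x,u)$ with $x(0)=x$ and $u$ held constant exists on $[0,\Delta]$. Lemma~\ref{lem:sampled_map} already gives that $S_\Delta$ is $C^r$ on this domain. It therefore remains to confirm that $\mathcal O$ is open, so that Assumption~\ref{ass:smooth_dynamics} holds verbatim.

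For openness I would use the augmented system $\dot x=F(x,u)$, $\dot u=0$, as in the proof of Lemma~\ref{lem:sampled_map}. This is a $C^r$ vector field, $r\ge1$, on an open subset of $\R^{n+m}$. By the fundamental theorem on flows (e.g.\ \cite[Ch.~9]{lee2013manifolds} or \cite[Ch.~5]{hartman2002ode}), its maximal flow domain is open in $\R\times\R^{n+m}$. Hence the time-$\Delta$ slice $\{(x,u):(\Delta,x,u)\text{ in the flow domain}\}$ is open in $\R^{n+m}$, and this slice is exactly $\mathcal O$. Uniqueness is automatic for a $C^1$ field, which is consistent with Assumption~\ref{ass:ct}. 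The same theorem also recovers the $C^r$ smoothness of the time-$\Delta$ map.

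With $f=S_\Delta$ and this open $\mathcal O$, Lemma~\ref{lem:rollout_smooth} shows that the admissible rollout domain $\cD_N$ is open. Theorem~\ref{thm:main_manifold} then says that $\B_N^{+}$ for $x_{k+1}=S_\Delta(x_k,u_k)$ is a $C^r$ embedded submanifold of $\R^{Nm+(N+1)n}$ of dimension $n+Nm$, with $\Phi_N$ providing global coordinates. Nothing in that theorem depends on how $f$ was obtained.

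The main obstacle is the interpretation of Assumption~\ref{ass:ct}, which speaks of the pairs "under consideration" rather than an explicit open set. If the intended domain were an arbitrary, non-open subset, Assumption~\ref{ass:smooth_dynamics} could fail. I would handle this by taking $\mathcal O$ to be the full maximal domain just described, which is open by the flow-domain argument. If a smaller set is intended, I would restrict to its interior, noting that the corollary concerns the behavior generated on that open domain.
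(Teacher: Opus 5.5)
Your proposal is correct and follows the paper's route: set $f=S_\Delta$, use Lemma~\ref{lem:sampled_map} for $C^r$ regularity, and apply Theorem~\ref{thm:main_manifold}. You also check that the domain of $S_\Delta$ is open, using openness of the maximal flow domain of the augmented field $\dot x=F(x,u)$, $\dot u=0$; this is a worthwhile addition, because the paper's one-line proof leaves it implicit even though Assumption~\ref{ass:smooth_dynamics} requires $\mathcal O$ to be open.
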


\begin{proof}
Lemma~\ref{lem:sampled_map} provides a $C^r$ discrete-time transition map.  Theorem~\ref{thm:main_manifold} applies directly with $f=S_\Delta$.
\end{proof}

\begin{remark}[Relation to continuous-time trajectory manifolds]
The finite-dimensional statement above should be distinguished from the
continuous-time trajectory-manifold construction of
\cite{hauser1998trajectory}. If the control $u(\cdot)$ is allowed to range
over an infinite-dimensional function space, then the corresponding
trajectory space is naturally an infinite-dimensional Banach manifold.
Here, the zero-order-hold assumption replaces $u(\cdot)$ over the horizon
by the finite vector
\[
    \mathbf u=(u_0,\ldots,u_{N-1})\in\mathbb R^{Nm},
\]
which is precisely what yields the finite intrinsic dimension
$n+Nm$. More general finite-dimensional input parameterizations can be
treated in the same way.
\end{remark}

\begin{remark}[Numerical one-step maps]
If training trajectories are generated by a fixed-step numerical integrator such as RK4, one may apply Theorem~\ref{thm:main_manifold} directly to the numerical one-step map.  For a $C^r$ vector field, the classical RK4 update is assembled from finitely many evaluations of $F$, additions, and scalar multiplications, so the resulting discrete update map is itself $C^r$ wherever those evaluations are defined.  The learned behavior is then the trajectory manifold of the discretized dynamics actually used to generate the data.
\end{remark}

\section{How much smoothness should be assumed?}

The preceding results distinguish the smoothness needed for different goals.

\begin{itemize}[leftmargin=2em]
    \item \textbf{$C^1$ dynamics are sufficient for the manifold theorem.}
    If $f\in C^1$, then $\B_N^{+}$ is a $C^1$ embedded manifold and has well-defined tangent spaces that vary continuously.

    \item \textbf{$C^2$ dynamics are a natural working assumption for trajectory-manifold control.}
    If $f\in C^2$, then $\B_N^{+}$ and the canonical rollout parameterization are $C^2$.  This is the appropriate baseline if second derivatives of a decoder are to be used as a smoothness or local-nonlinearity regularizer.

    For $C^2$ dynamics, genuine second-order extrinsic geometry of the embedded
behavior manifold is also well-defined. If
\[
    \mathbf w=\Phi_N(q)
\]
and tangent vectors $v_1,v_2\in T_{\mathbf w}\mathfrak B_N^+$ are written
as
\[
    v_i=D\Phi_N(q)a_i,
\]
then the second fundamental form of the Euclidean embedding is obtained from
the normal component
\[
    \mathrm{II}_{\mathbf w}(v_1,v_2)
    =
    \Pi_{N_{\mathbf w}\mathfrak B_N^+}
    \left(
        D^2\Phi_N(q)[a_1,a_2]
    \right).
\]
Accordingly, the raw Hessian of a particular decoder parameterization is
not, by itself, a coordinate-invariant notion of manifold curvature:
tangential second-derivative components depend on the chosen coordinates.

A related trajectory-specific notion was developed by Notarstefano and
Hauser \cite{notarstefano2008curvature}. They define orthogonality using a
weighted $L_2$ inner product and define a radius of curvature through the
loss of a second-order sufficiency condition for a nearest-trajectory
optimal-control problem. Their construction is therefore an extrinsic,
metric-dependent notion of trajectory-manifold curvature. In subsequent
learning formulations, unprojected decoder-Hessian penalties are more
accurately interpreted as local-nonlinearity regularizers unless they are
explicitly connected to an invariant geometric construction such as a
second fundamental form or a specified nearest-projection geometry. \footnote{The nearest-point viewpoint is related to the classical notion of
\emph{reach} of an embedded subset \cite{federer1959curvature}, defined as
the largest tubular radius on which the nearest-point projection is unique.
The trajectory-manifold radius in
\cite{notarstefano2008curvature} is a local, direction- and
metric-dependent second-order construction and should not in general be
identified with the global reach.}

    \item \textbf{Higher smoothness is inherited but is not required.}
    If the physical transition law is $C^r$ or $C^\infty$, then the finite-horizon behavior has the same regularity.  There is no need to assume $C^\infty$ merely to establish the manifold structure.
\end{itemize}

Accordingly, for the broader theoretical development it is reasonable to adopt $f\in C^2$ on the operating region while noting that Theorem~\ref{thm:main_manifold} itself only needs $C^1$.

\section{Boundaries, constraints, and noise}

The main theorem concerns the unconstrained deterministic behavior over an open admissible domain.  Three extensions should be distinguished.

\paragraph{Input constraints.}
If one restricts the free input coordinates to a closed box, the interior still has the manifold structure above.  Including the box boundary naturally gives a manifold-with-corners structure in the free-coordinate domain, which is carried to the trajectory set by the embedding.  General nonlinear state constraints can create more complicated boundary geometry and require separate regularity conditions.

\paragraph{Process noise.}
With process noise
\[
    x_{k+1}=f(x_k,u_k,\xi_k),
\]
the pair $(x_0,\mathbf u)$ no longer determines a unique trajectory.  If the noise realization $\bm\xi$ is treated as an additional free coordinate, the same graph argument applies with the intrinsic dimension enlarged by the dimension of the noise coordinates.  If noise is not included as a coordinate, measured trajectories should instead be regarded as lying near a deterministic nominal manifold or as samples from a distribution over trajectories.

\paragraph{Measurement noise.}
Measurement noise does not change the deterministic state behavior itself, but observed samples need not lie exactly on it.  This is analogous to the motivation for slack-variable and regularized formulations of DeePC outside the noiseless LTI setting \cite{coulson2019deepc}.

\section{What has been established}

For deterministic nonlinear dynamics with measured state, the first theoretical step of trajectory-manifold control can be summarized as
\begin{equation}
\boxed{
\begin{gathered}
    f\in C^r,\quad r\ge1\\[1mm]
    \Downarrow\\[-1mm]
    \B_N^{+}\text{ is a }C^r\text{ embedded finite-horizon behavior manifold}\\[1mm]
    \dim\B_N^{+}=n+Nm\\[1mm]
    \B_N^{+}\cong_{C^r}\cD_N\subset\R^{n+Nm}\quad\text{through the coordinates }(x_0,\mathbf u).
\end{gathered}}
\end{equation}

This finite-dimensional result is complementary to earlier
continuous-time Banach-manifold descriptions of nonlinear trajectory spaces
\cite{hauser1998trajectory,notarstefano2008curvature}. The distinguishing
feature here is that finite horizon together with a finite-dimensional
input parameterization exposes the global coordinates
$(x_0,\mathbf u)$ explicitly.

The proof is stronger than an abstract manifold-existence assumption: the manifold possesses an explicit global chart given by the initial state and future input sequence.  Consequently, the next learning-theoretic step can be phrased as approximation of the ordinary finite-dimensional rollout map
\[
    \Phi_N:\cD_N\subset\R^{n+Nm}
    \longrightarrow
    \R^{Nm+(N+1)n}
\]
on a compact operating set, rather than approximation of an arbitrary unknown abstract manifold. Data-driven approximation of this rollout map, together with the subsequent predictive-control formulation and its closed-loop stability analysis, is deferred to Part~II.

\bibliographystyle{IEEEtran}
\bibliography{ref}

@inproceedings{coulson2019deepc,
  author    = {Coulson, Jeremy and Lygeros, John and D{\"o}rfler, Florian},
  title     = {Data-Enabled Predictive Control: In the Shallows of the {DeePC}},
  booktitle = {2019 18th European Control Conference (ECC)},
  pages     = {307--312},
  year      = {2019},
  doi       = {10.23919/ECC.2019.8795639},
  url       = {https://doi.org/10.23919/ECC.2019.8795639}
}

@article{willems1979physical,
  author  = {Willems, Jan C.},
  title   = {System theoretic models for the analysis of physical systems},
  journal = {Ricerche di Automatica},
  volume  = {10},
  pages   = {71--106},
  year    = {1979}
}

@article{willems1986timeseries,
  author  = {Willems, Jan C.},
  title   = {From time series to linear system---Part I: Finite dimensional linear time invariant systems},
  journal = {Automatica},
  volume  = {22},
  number  = {5},
  pages   = {561--580},
  year    = {1986},
  doi     = {10.1016/0005-1098(86)90066-X},
  url     = {https://doi.org/10.1016/0005-1098(86)90066-X}
}

@article{willems1991paradigms,
  author  = {Willems, Jan C.},
  title   = {Paradigms and puzzles in the theory of dynamical systems},
  journal = {IEEE Transactions on Automatic Control},
  volume  = {36},
  number  = {3},
  pages   = {259--294},
  year    = {1991},
  doi     = {10.1109/9.73561},
  url     = {https://doi.org/10.1109/9.73561}
}

@article{willems2005pe,
  author  = {Willems, Jan C. and Rapisarda, Paolo and Markovsky, Ivan and De Moor, Bart L. M.},
  title   = {A note on persistency of excitation},
  journal = {Systems \& Control Letters},
  volume  = {54},
  number  = {4},
  pages   = {325--329},
  year    = {2005},
  doi     = {10.1016/j.sysconle.2004.09.003},
  url     = {https://doi.org/10.1016/j.sysconle.2004.09.003}
}

@book{markovsky2006behavioral,
  author    = {Markovsky, Ivan and Willems, Jan C. and Van Huffel, Sabine and De Moor, Bart},
  title     = {Exact and Approximate Modeling of Linear Systems: A Behavioral Approach},
  publisher = {SIAM},
  year      = {2006},
  doi       = {10.1137/1.9780898718263},
  url       = {https://doi.org/10.1137/1.9780898718263}
}

@book{lee2013manifolds,
  author    = {Lee, John M.},
  title     = {Introduction to Smooth Manifolds},
  edition   = {2},
  series    = {Graduate Texts in Mathematics},
  volume    = {218},
  publisher = {Springer},
  year      = {2013},
  doi       = {10.1007/978-1-4419-9982-5},
  url       = {https://doi.org/10.1007/978-1-4419-9982-5}
}

@book{hartman2002ode,
  author    = {Hartman, Philip},
  title     = {Ordinary Differential Equations},
  edition   = {2},
  series    = {Classics in Applied Mathematics},
  volume    = {38},
  publisher = {SIAM},
  year      = {2002},
  note      = {See Chapter 5, ``Dependence on Initial Conditions and Parameters''},
  doi       = {10.1137/1.9780898719222},
  url       = {https://doi.org/10.1137/1.9780898719222}
}

@inproceedings{hauser1998trajectory,
  author    = {John Hauser and David G. Meyer},
  title     = {The Trajectory Manifold of a Nonlinear Control System},
  booktitle = {Proceedings of the 37th IEEE Conference on Decision and Control},
  volume    = {1},
  pages     = {1034--1039},
  year      = {1998},
  doi       = {10.1109/CDC.1998.760833}
}

@inproceedings{hauser2002projection,
  author    = {John Hauser},
  title     = {A Projection Operator Approach to the Optimization of
               Trajectory Functionals},
  booktitle = {15th IFAC World Congress},
  volume    = {35},
  number    = {1},
  pages     = {377--382},
  year      = {2002},
  doi       = {10.3182/20020721-6-ES-1901.00312}
}

@inproceedings{notarstefano2008curvature,
  author    = {Giuseppe Notarstefano and John Hauser},
  title     = {On the Curvature of the Trajectory Manifold of Nonlinear Systems},
  booktitle = {Proceedings of the 47th IEEE Conference on Decision and Control},
  pages     = {1151--1156},
  year      = {2008},
  doi       = {10.1109/CDC.2008.4739487}
}

@article{federer1959curvature,
  author  = {Herbert Federer},
  title   = {Curvature Measures},
  journal = {Transactions of the American Mathematical Society},
  volume  = {93},
  number  = {3},
  pages   = {418--491},
  year    = {1959},
  doi     = {10.2307/1993504}
}

\end{document}